\newtheorem{prop}{Proposition}
\newtheorem{lem}{Lemma}
\newtheorem{defi}{Definition}
\renewcommand{\subjclassname}{AMS \textup{2010} Mathematics Subject
Classification\ }
\author{Jos\'{e} Mar\'{i}a Grau}
\address{Departamento de Matemáticas, Universidad de Oviedo\\ Avda. Calvo Sotelo, s/n, 33007 Oviedo, Spain}
\email{grau@uniovi.es}
\author{Antonio M. Oller-Marc\'{e}n}
\address{Departamento de Matem\'{a}ticas, Universidad de Zaragoza\\ C/Pedro Cerbuna 12, 50009 Zaragoza, Spain} \email{oller@unizar.es}
\title{Giuga Numbers and the arithmetic derivative}
\begin{document}
\maketitle

\begin{abstract} We characterize Giuga Numbers as solutions to the equation $n'=an+1$, with $a \in
\mathbb{N}$ and $n'$ being the arithmetic derivative. Although this fact does not refute Lava's conjecture, it brings doubts about its veracity. 
\end{abstract}
\subjclassname{11B99, 11A99}

\keywords{Keywords: Arithmetic derivative, Giuga Numbers, Lava's conjecture.}

\section{Introduction }
\subsection{The Arithmetic Derivative.}

The arithmetic derivative was introduced by Barbeau in \cite{BAR}
(see A003415 in the \emph{On-Line Encyclopedia of Integer
Sequences}). The derivative of an integer is defined to be the unique map sending
every prime to 1 and satisfying the Leibnitz rule; i.e: $p'=1$, for any
prime $p$, and $(nm)'=nm'+mn'$ for any $n,m\in \mathbb{N}$. This map makes
sense and is well-defined (see \cite{VIC}).

\begin{prop} 
If $n=\prod_{i=1}^{k}p_i^{r_i}$ is the factorization of $n$ in prime
powers, then the only way to define $n'$ satisfying the desired properties is:
$$ n'=n\sum_{i=1}^{k}\frac{r_i}{p_i}.$$
\end{prop}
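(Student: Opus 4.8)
The plan is to derive the formula directly from the two defining properties — $p'=1$ for every prime $p$ and the Leibniz rule $(nm)'=nm'+mn'$ — so that it is \emph{forced}, which is exactly the uniqueness claim. First I would record two elementary consequences. Applying Leibniz to $1=1\cdot 1$ gives $1'=1'+1'$, hence $1'=0$. Then an easy induction on $r\ge 1$, using Leibniz on $p^{r}=p\cdot p^{r-1}$, yields $(p^{r})'=rp^{r-1}$: indeed $(p^{r})'=p\,(p^{r-1})'+p^{r-1}p'=p\,(r-1)p^{r-2}+p^{r-1}=rp^{r-1}$.

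Next I would pass to the ``logarithmic derivative''. For $n\ge 1$ set $L(n)=n'/n\in\mathbb{Q}$. Dividing the Leibniz identity $(nm)'=nm'+mn'$ by $nm$ shows that $L$ is completely additive, $L(nm)=L(n)+L(m)$. Hence, for $n=\prod_{i=1}^{k}p_i^{r_i}$ one gets $L(n)=\sum_{i=1}^{k}r_iL(p_i)=\sum_{i=1}^{k}r_i\frac{p_i'}{p_i}=\sum_{i=1}^{k}\frac{r_i}{p_i}$, and multiplying through by $n$ gives $n'=n\sum_{i=1}^{k}\frac{r_i}{p_i}$. Since each step was dictated by the axioms, any map satisfying the required properties must agree with this formula, which establishes that it is the only possibility; that such a map actually exists (i.e.\ that the formula is consistent with unique factorization and with both axioms) is precisely the well-definedness statement cited from \cite{VIC}.

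Alternatively, one can avoid fractions altogether and argue by a single induction on $\Omega(n)$, the number of prime factors of $n$ counted with multiplicity: the base cases $\Omega(n)\le 1$ are $1'=0$ and $p'=1$, and for $\Omega(n)\ge 2$ one writes $n=pm$ with $\Omega(m)=\Omega(n)-1$, applies Leibniz, and substitutes the inductive hypothesis for $m'$ together with the prime-power computation when convenient. I do not expect a genuine obstacle here — the result is essentially a formal manipulation of the two axioms; the only points needing a little care are treating $n=1$ correctly and establishing the prime-power case $(p^{r})'=rp^{r-1}$ before carrying out the general multiplicative step.
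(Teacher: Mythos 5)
Your proof is correct. Note that the paper itself states this proposition without proof, simply deferring to \cite{VIC}, so there is no in-paper argument to compare against; your derivation (computing $1'=0$, establishing $(p^r)'=rp^{r-1}$ by induction, and observing that the logarithmic derivative $n'/n$ is forced to be completely additive) is the standard one and correctly separates the uniqueness claim, which you prove, from the existence/well-definedness claim, which you rightly leave to the cited reference.
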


\subsection{Giuga Numbers} 

In \cite{BOR}, Giuga Numbers were introduced in the following way motivated by previous work by Giuga \cite{GIU}.

\begin{defi}
A Giuga Number is a composite number $n$ such that $p$ divides $\frac{n}{p}-1$ for
every $p$, prime divisor of $n$.
\end{defi}

It follows easily from definition that every Giuga Number is square-free. There are several characterizations of Giuga Numbers, the most important
being the following.

\begin{prop}
Let $n$ be a composite integer. Then, the following are equivalent:
\begin{itemize}
\item [i)] $n$ is a Giuga Number.
\item [ii)] $\displaystyle{\sum_{p|n} \frac{1}{p}-\prod_{p|n} \frac{1}{p}
\in \mathbb{N}}$ (see \cite{GIU}).
\item[iii)] $\displaystyle{\sum_{j=1}^{n-1}  j^{\phi(n)} \equiv -1}$ (mod
$n$), where $\phi$ is Euler's totient function (see \cite{BOR}).
\item [iv)] $nB_{\phi(n)} \equiv -1$ (mod $n$), where $B$ is a Bernoulli
number (see \cite{AGO}).
\end{itemize}
\end{prop}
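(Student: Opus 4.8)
The plan is to anchor the argument on i) and establish i) $\Leftrightarrow$ ii), i) $\Leftrightarrow$ iii), and iii) $\Leftrightarrow$ iv). Throughout I write a square-free $n$ as $n=p_1\cdots p_k$; since (as already noted) every Giuga number is square-free, this is the relevant case, and the defining property reads $p_j\mid \frac{n}{p_j}-1$ for every $j$.

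For i) $\Leftrightarrow$ ii) I would just clear denominators. Multiplying $\sum_{p\mid n}\frac1p-\prod_{p\mid n}\frac1p$ by $n$ yields $\sum_{i=1}^{k}\frac{n}{p_i}-1$, so ii) is equivalent to $n\mid \sum_{i}\frac{n}{p_i}-1$. Because $n$ is square-free this is in turn equivalent to $p_j\mid \sum_{i}\frac{n}{p_i}-1$ for all $j$, and reducing modulo $p_j$ annihilates every term with $i\neq j$, leaving precisely $\frac{n}{p_j}\equiv 1\pmod{p_j}$ — the Giuga condition.

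For i) $\Leftrightarrow$ iii) the idea is to evaluate the power sum prime by prime via the Chinese Remainder Theorem. Fix $p_j\mid n$; as $j$ runs over $0,1,\dots,n-1$ the residue $j\bmod p_j$ takes each value in $\{0,\dots,p_j-1\}$ exactly $\frac{n}{p_j}$ times, hence $\sum_{j=1}^{n-1}j^{\phi(n)}\equiv \frac{n}{p_j}\sum_{a=0}^{p_j-1}a^{\phi(n)}\pmod{p_j}$. Since $n$ is square-free, $\phi(n)=\prod_i(p_i-1)$ is a positive multiple of $p_j-1$, so the classical evaluation $\sum_{a=0}^{p-1}a^m\equiv -1\pmod p$ whenever $(p-1)\mid m$ (and $\equiv 0$ otherwise) gives $\sum_{j=1}^{n-1}j^{\phi(n)}\equiv -\frac{n}{p_j}\pmod{p_j}$. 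Therefore iii) holds iff $\frac{n}{p_j}\equiv 1\pmod{p_j}$ for every $j$, which is again the Giuga condition.

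Finally, for iii) $\Leftrightarrow$ iv) I would invoke Faulhaber's formula with $m=\phi(n)$,
\[\sum_{j=1}^{n-1}j^{m}=\frac{1}{m+1}\sum_{t=0}^{m}\binom{m+1}{t}B_t\,n^{\,m+1-t},\]
and note that each summand with $t<m$ carries a factor $n^{m+1-t}$ with exponent at least $2$, so that only the term $t=m$, namely $B_{m}\,n$, should survive modulo $n$; this is iv). The \emph{main obstacle} is that the $B_t$ have denominators, so "survives modulo $n$" is not literally a congruence between integers. Making it rigorous requires controlling those denominators by the von Staudt--Clausen theorem (the denominator of $B_t$ is the square-free number $\prod_{(p-1)\mid t}p$), together with the vanishing $B_t=0$ for odd $t\ge 3$, and then checking that for each surviving $t<m$ the extra powers of $n$ absorb the primes left in the denominator of $\binom{m+1}{t}B_t\,n^{m+1-t}/(m+1)$, so that the congruence $nB_{\phi(n)}\equiv -1\pmod n$ is both meaningful and equivalent to iii). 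Arranging this denominator bookkeeping cleanly is where the real work lies.
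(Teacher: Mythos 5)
The paper never proves this proposition: it is quoted as known background, with i)$\Leftrightarrow$ii), i)$\Leftrightarrow$iii) and i)$\Leftrightarrow$iv) attributed to \cite{GIU}, \cite{BOR} and \cite{AGO} respectively. So your attempt cannot be compared to a proof in the paper; judged on its own, the computations you give for i)$\Leftrightarrow$ii) and i)$\Leftrightarrow$iii) are the standard ones and are correct \emph{for square-free} $n$, but the argument has two genuine gaps.

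First, the blanket reduction to square-free $n$ is only legitimate for the forward implications. ``Every Giuga number is square-free'' lets you assume $n=p_1\cdots p_k$ when proving i)$\Rightarrow$ii), iii), iv); but to prove ii)$\Rightarrow$i) you must also rule out non-square-free composite $n$ satisfying ii), and in fact you cannot: for $n=60$ one has $\frac12+\frac13+\frac15-\frac1{30}=1\in\mathbb{N}$, yet $60$ is not a Giuga number since $2\nmid \frac{60}{2}-1=29$. The quantity in ii) depends only on the radical of $n$, so it cannot be equivalent to i) over all composite $n$; the equivalence has to be read with square-freeness built into ii) (this is a standing imprecision in how the characterization is often quoted, and your write-up inherits it rather than flagging it). The same issue arises, less fatally, for iii)$\Rightarrow$i): your counting of residues modulo $p_j$ and the identity $\phi(n)=\prod_i(p_i-1)$ both silently assume $p_j$ divides $n$ exactly once, so you still owe an argument that iii) fails whenever some $p^2\mid n$ (it does --- one checks $\sum_{j=1}^{n-1}j^{\phi(n)}\not\equiv-1$ modulo $p^2$, as the example $n=60$, where the sum is $\equiv 2 \pmod 4$, already shows --- but that computation is not in your proof).

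Second, the step iii)$\Leftrightarrow$iv) is a plan, not a proof. You correctly identify Faulhaber's formula plus von Staudt--Clausen as the route and correctly identify the Bernoulli denominators as the obstacle, but you then state that ``arranging this denominator bookkeeping cleanly is where the real work lies'' and stop. That bookkeeping \emph{is} the content of the equivalence: even the meaning of the congruence $nB_{\phi(n)}\equiv-1\pmod n$ for the rational number $B_{\phi(n)}$ rests on von Staudt--Clausen (which shows that for square-free $n$ every prime $p\mid n$ satisfies $(p-1)\mid\phi(n)$, hence divides the denominator of $B_{\phi(n)}$ exactly once, making $nB_{\phi(n)}$ an $n$-integral rational). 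Once that theorem is actually invoked in the form $B_m+\sum_{(q-1)\mid m}\frac1q\in\mathbb{Z}$, one gets $nB_{\phi(n)}\equiv-\frac{n}{p}\pmod p$ for each $p\mid n$ directly, which proves iv)$\Leftrightarrow$i) without Faulhaber at all; as written, however, your equivalence iii)$\Leftrightarrow$iv) remains unestablished.
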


Up to date, only thirteen Giuga Numbers are known (see A007850 in the
\emph{On-Line Encyclopedia of Integer Sequences}):
\begin{itemize}
\item With 3 factors:
$$\textbf{30} = 2 \cdot 3 \cdot 5.$$
\item With 4 factors:
\begin{align*}
\textbf{858}&=2 \cdot 3 \cdot 11 \cdot 13,\\
\textbf{1722}&=2 \cdot 3 \cdot 7 \cdot 41.
\end{align*}
\item With 5 factors:
$$\textbf{66198} = 2 \cdot 3 \cdot 11 \cdot 17 \cdot 59.$$
\item With 6 factors:
\begin{align*}
\textbf{2214408306}&=2 \cdot 3 \cdot 11 \cdot 23 \cdot 31 \cdot 47057,\\
\textbf{24423128562}&=2 \cdot 3 \cdot 7 \cdot 43 \cdot 3041 \cdot 4447.
\end{align*}
\item With 7 factors:
\begin{align*}
\textbf{432749205173838}&=2 \cdot 3  \cdot 7 \cdot 59 \cdot 163 \cdot 1381
\cdot 775807,\\
\textbf{14737133470010574}&=2 \cdot 3 \cdot 7 \cdot 71 \cdot 103 \cdot 67213
\cdot 713863,\\
\textbf{550843391309130318}&=2 \cdot 3 \cdot 7 \cdot 71 \cdot 103 \cdot
61559 \cdot 29133437.
\end{align*}
\item With 8 factors:
\begin{align*}
\textbf{244197000982499715087866346}=\ & 2 \cdot 3 \cdot 11 \cdot 23 \cdot
31 \cdot 47137 \cdot 28282147 \cdot \\ &3892535183,\\
\textbf{554079914617070801288578559178}=\ & 2 \cdot 3 \cdot 11 \cdot 23
\cdot 31 \cdot 47059 \cdot 2259696349 \cdot\\ & 110725121051,\\
\textbf{1910667181420507984555759916338506}=\ & 2 \cdot 3 \cdot 7 \cdot 43\cdots 1831 \cdot 138683 \cdot 2861051 \cdot \\ &1456230512169437.
\end{align*}
\end{itemize}

There are no other Giuga numbers with less than 8 prime factors. There is
another known Giuga Number (found by Frederick Schneider in 2006) which has
10 prime factors, but it is not known if there is any Giuga Number between
this and the previous ones. This biggest known Giuga Number is the following:

\begin{align*}
&\textbf{4200017949707747062038711509670656632404195753751630609228764416}\\
&\textbf{142557211582098432545190323474818}= 2 \cdot 3 \cdot 11 \cdot 23
\cdot 47059 \cdot 2217342227 \cdot\\ &1729101023519 \cdot
8491659218261819498490029296021 \cdot\\
&658254480569119734123541298976556403.
\end{align*}

Observe that all known Giuga Numbers are even. If an odd Giuga number
exists, it must be the product of at least 14 primes. It is not even known
if there are infinitely many Giuga Numbers.

In 2009, Paolo P. Lava, an active collaborator of the \emph{On-Line Encyclopedia of
Integer Sequences}, conjectured that Giuga Numbers were exactly the
solutions of the differential equation $n'=n+1$, with $n'$ being the arithmetic
derivative of $n$. It is not a very known conjecture (see
\cite{LAVA} or comments about sequence A007850 in the \emph{On-Line
Encyclopedia of Integer Sequences}) but it seems known enough to appear in wikipedia's article on Giuga Numbers $$\verb"http://en.wikipedia.org/wiki/Giuga_number".$$

\section{Giuga Numbers and the arithmetic derivative: bringing doubts on Lava's
conjecture.}

It is surprising that Lava's conjecture has not been answered. Certainly the thirteen known Giuga Numbers satisfy $n'=n+1$. Nevertheless, we can observe that this is exclusively due to the fact that these thirteen known Giuga Numbers satisfy $ \sum_{p|n} \frac{1}{p}-\prod_{p|n}
\frac{1}{p} =1$. Let us introduce now a novel characterization of Giuga Numbers in terms of the arithmetic derivative.

To do so we first need the following technical lemma (see \cite{VIC}[Corollary 2]).

\begin{lem} If $n'=an+1$, then $n$ is square-free.
\end{lem}
\begin{proof}
If there is a prime $p$ such that $p^2$ divides $n$, then $p$ must divide $n'$. Since $n'=an+1$ this yields a contradiction.
\end{proof}

\begin{prop} 
Let $n$ be an integer. Then $n$ is a Giuga number if and only if  $n'=an+1$ for some
$a\in\mathbb{N}$
\end{prop}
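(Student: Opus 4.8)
The plan is to work with the square-free factorization throughout, using the technical lemma to justify this reduction in the forward direction and the known square-freeness of Giuga numbers in the reverse direction. So suppose $n = p_1 \cdots p_k$ is square-free with $k \geq 2$. By the formula in the first proposition, $n' = n \sum_{i=1}^k \frac{1}{p_i} = \sum_{i=1}^k \frac{n}{p_i}$, which is an integer as expected. The equation $n' = an+1$ then becomes $\sum_{i=1}^k \frac{n}{p_i} = an + 1$. I would divide through by $n$ to rewrite this as $\sum_{p|n} \frac{1}{p} - a = \frac{1}{n} = \prod_{p|n} \frac{1}{p}$, i.e. $\sum_{p|n}\frac1p - \prod_{p|n}\frac1p = a \in \mathbb{N}$.

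At this point the equivalence is essentially immediate from characterization (ii) in the second proposition. For the reverse implication, if $n$ is a Giuga number then it is composite and square-free, so the computation above runs in reverse: $\sum_{p|n}\frac1p - \prod_{p|n}\frac1p$ is a natural number $a$, and multiplying by $n$ gives $n' = an + 1$. One should note that $a \geq 1$ here — that is, $a$ is genuinely a positive natural number — because $\sum_{p|n}\frac1p > \prod_{p|n}\frac1p$ whenever $n$ has at least two prime factors, and in fact $a=0$ would force $\sum\frac1p=\frac1n$ which is impossible for $k\ge 2$; depending on the paper's convention for $\mathbb{N}$ this is the point to be careful about. For the forward implication, if $n' = an+1$ then by the lemma $n$ is square-free, and $n$ must be composite: if $n=p$ were prime then $n' = 1$, forcing $ap + 1 = 1$, i.e. $a = 0$ and no valid equation (and $n=1$ gives $1' = 0 \neq a + 1$); so $k \geq 2$, the displayed manipulation applies, and characterization (ii) yields that $n$ is Giuga.

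The only real subtlety — and the step I would treat most carefully — is the bookkeeping around small cases and the definition of $\mathbb{N}$: ruling out $n$ prime, $n=1$, and confirming that the constant $a$ produced is a \emph{positive} integer (so that the phrase ``for some $a \in \mathbb{N}$'' matches the conjectural form $n' = n+1$ as the case $a=1$). Everything else is a one-line algebraic rearrangement of the derivative formula combined with an already-quoted characterization, so there is no serious obstacle; the content of the proposition is really the observation that the $-1$ on the right-hand side of $n' = an+1$ is exactly $-\prod_{p|n}\frac1p \cdot n$, which is what connects it to the classical criterion.
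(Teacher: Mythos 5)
Your proof is correct and follows essentially the same route as the paper: rewrite $n'=n\sum_{p|n}\frac1p$ for square-free $n$, divide by $n$, and match the result with characterization (ii), using the lemma for square-freeness in one direction and the square-freeness of Giuga numbers in the other. Your extra bookkeeping (ruling out $n$ prime and $n=1$, and checking $a\geq 1$) is more careful than the paper's own proof, which silently assumes $n$ is composite before invoking characterization (ii) in the converse direction.
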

\begin{proof} 
If $n$ is a Giuga Number then it is composite and square-free.
Thus, we can put $n=p_1 p_2 \cdots p_k$ con $k>1$ and since $n$ is a Giuga Number 
$$ \sum_{i=1}^k \frac{1}{p_i}-\prod_{i=1}^k\frac{1}{p_i}=\sum_{i=1}^k\frac{1}{p_i}
-\frac{1}{n}=a \in \mathbb{Z}.$$
Consequently, $\displaystyle{n\sum_{i=1}^k\frac{1}{p_i} -1=a n}$ and it follows by definition that $n'=a
n+1$.

Conversely, assume that $n'=an+1$ con $a\in \mathbb{N}$. Then $n$ is square-free; i.e., $\displaystyle{\prod_{p|n}\frac{1}{p}=\frac{1}{n}}$. Thus, $\displaystyle{an+1=n'= n\sum_{i=1}^k\frac{1}{p_i}}$ implies that $\displaystyle{\sum_{i=1}^k\frac{1}{p_i}-\frac{1}{n}=a\in \mathbb{N}}$; i.e., $n$ is a Giuga Number, as claimed.
\end{proof}

This result shows that Lava's conjecture is as close (or far away) to be refuted as the discovery of a Giuga Number with 

$$\sum_{i=1}^k\frac{1}{p_i}-\frac{1}{n}>1.$$ 

This is not likely to happen at short term, since it is known that such a number should have more than 59 prime factors \cite{survey}. In any case it has been pointed out that Lava's conjecture is not plausible and it could even be false.

\section{Concerned sequences}
This paper deals with sequences: A003415, A007850.

\end{document}